\newtheorem{theorem}{Theorem}[section]
\newtheorem{lemma}[theorem]{Lemma}
\newtheorem{proposition}[theorem]{Proposition}
\newtheorem{remark}[theorem]{Remark}
\theoremstyle{definition}
\newtheorem{definition}[theorem]{Definition}
\theoremstyle{remark}
\newtheorem*{note*}{Note}
\numberwithin{equation}{section}
\newcommand{\rank}{\mathop{\operator@font rank}}
\newcommand{\conv}{\mathop{\operator@font conv}}
\newcommand{\vol}{\mathop{\operator@font vol}}
\newcommand{\onetagright}{\tagsleft@false}
\newcommand{\Di}{\mathbb{D}}
\renewcommand{\epsilon}{\varepsilon}
\newcommand{\red}[1]{\textcolor{red}{#1}}
\begin{document}
\small

\title{\bf Topological and Algebraic Genericity and Spaceability for an extended chain of sequence spaces}

\medskip

\author{M. Axarlis, I. Deliyanni, Th. Loukidou, V. Nestoridis, K. Papanikos, N. Tziotziou}

\date{}

\maketitle

\centerline {\it Dedicated to the memory of Professor Dimitris Gatzouras}

\begin{abstract}
\noindent \footnotesize We examine topological and algebraic genericity and spaceability for any pair $(X,Y)$, $X\subset Y$, $X\neq Y$ belonging to an extended chain of sequence spaces which contains the $\ell^p$ spaces, $0<p\leq \infty$.
\end{abstract}

\section{Introduction}

In \cite{nestor}, \cite{bern}, the chain of spaces $\cap_{p>\alpha}\ell^p \; (\alpha\geq 0)$, $\ell^p \; (0<p<+\infty)$, $c_0$, $\ell^{\infty}$ was considered and, for any pair $(X,Y)$ with $X,Y,\; X\subsetneqq Y$ belonging to this chain, topological and algebraic  genericity and spaceability were investigated, extending previous results. We recall the definitions. Given a pair of vector spaces $(X,Y)$ as above, we say that we have topological genericity if $X$ is contained in an $F_{\sigma}$ - meager subset of $Y$, equivalently if $Y\setminus X$ is residual in $Y$. This is always the case and then a question that arises naturally is whether $X$ is indeed equal to an $F_{\sigma}$ subset of $Y$ or not. Furthermore, we say that we have algebraic genericity for the pair $(X,Y)$, if there exists a vector subspace $F$ of $Y$, dense in $Y$, such that $F$ is contained in $(Y\setminus X) \cup \{0\}$. Finally, we have spaceability if $(Y\setminus X)\cup \{0\}$ contains a closed infinite dimensional subspace of $Y$.

\noindent In the present paper we extend the above chain by adding the space $A^{\infty}({\mathbb D})$ which is contained in $\cap_{p>0}\ell^p$ and the spaces $H({\mathbb D})\subset {\mathbb C}^{{\mathbb N}_0} $ which contain  $\ell^{\infty}$. The spaces $A^{\infty}({\mathbb D})$ and $H({\mathbb D})$ are spaces of holomorphic functions on the open unit disc $D$ of the complex plane ${\mathbb C}$, but they can also be seen as sequence spaces via the identification of any holomorphic function on ${\mathbb D}$ with the sequence of its Taylor coefficients. More precisely, for $f=\sum\limits_{n=0}^{\infty} a_nz^n$, we have that $f$ belongs to $A^{\infty}({\mathbb D})$ if and only if, for every $k=1,2,\ldots$, it holds that $n^ka_n\rightarrow 0$ as $n\rightarrow +\infty$, while $f$ belongs to $H({\mathbb D})$ if and only if $\lim\sup\sqrt[n]{|a_n|}\leq 1.$

\noindent For $X\subsetneqq Y$ belonging to this extended chain of spaces, we examine topological and algebraic genericity and spaceability, completing thus the results of \cite{nestor} and \cite{bern}. We also mention the remarkable papers \cite{greg} and \cite{pap} which are related to this work. 

\noindent For algebraic genericity and spaceability we refer the reader to \cite{ber pel} and \cite{ar ber pel}. For topological genericity we refer to \cite{bay}.

\noindent This work was completed by the students and the instructors in the frame of an advanced undergraduate course in Analysis at the Department of Mathematics of the National and Kapodistrian University of Athens in Greece.

\section{Topological Genericity}\label{topological}

We begin with the following\\
\begin{proposition}
 Let $\mathbb{C}^{\mathbb{N}_0}$ be the set of sequences $(a_n)_{n=0}^{\infty}$ with $a_n \in \mathbb{C}, n=0,1,2,\dots$, endowed with the usual operations, pointwise addition, scalar multiplication
 $+,\cdot$. Let $X,Y$ be two $F$-spaces which are vector subspaces of $\mathbb{C}^{\mathbb{N}_0}$.
  We assume that convergence of a sequence $a^m= \left (a_n^m \right )_{n=0}^{\infty} $ in either $X$ or $Y$ implies pointwise convergence,
 that is, if $a^m \xrightarrow{m \to \infty} a$ in $X$ or $Y$, then $a^m_n \xrightarrow{m \to \infty} a_n$ for all $n= 0,1,2,\dots$
If $X \subset Y$ then the inclusion map $I:X\rightarrow Y, \, I(\alpha)=\alpha$, is continuous.
\end{proposition}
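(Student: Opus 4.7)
The plan is to invoke the closed graph theorem for $F$-spaces, which states that a linear map between $F$-spaces is continuous if and only if its graph is closed. Since $X$ and $Y$ are given to be $F$-spaces and $I$ is clearly linear (it is the identity, which preserves the pointwise vector space operations inherited from $\mathbb{C}^{\mathbb{N}_0}$), it suffices to verify that the graph $G(I) = \{(\alpha, \alpha) : \alpha \in X\} \subset X \times Y$ is closed.

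To check this, I would take a sequence $(a^m, I(a^m)) = (a^m, a^m)$ in $G(I)$ and assume it converges in $X \times Y$, say $a^m \to a$ in $X$ and $a^m \to b$ in $Y$. The goal is to show $a = b$, which will give $(a,b) = (a, I(a)) \in G(I)$. Here the hypothesis on pointwise convergence enters: convergence of $a^m$ to $a$ in $X$ forces $a_n^m \to a_n$ in $\mathbb{C}$ for every $n \in \mathbb{N}_0$, while convergence of $a^m$ to $b$ in $Y$ forces $a_n^m \to b_n$ in $\mathbb{C}$ for every $n$. Uniqueness of limits in $\mathbb{C}$ yields $a_n = b_n$ for all $n$, hence $a = b$ as elements of $\mathbb{C}^{\mathbb{N}_0}$, which proves the graph is closed.

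There is no real obstacle here; the proof is essentially a textbook application of the closed graph theorem. The only subtle point worth stating explicitly is why $a \in X$ implies $(a, I(a)) \in G(I)$ lands in the right set, i.e., the hypothesis $X \subset Y$ ensures $I$ is well-defined as a map into $Y$. After the closed graph theorem is invoked, continuity of $I : X \to Y$ is immediate.
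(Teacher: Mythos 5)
Your proposal is correct and follows exactly the same route as the paper: apply the closed graph theorem for $F$-spaces, and verify closedness of the graph by using the pointwise-convergence hypothesis in both $X$ and $Y$ together with uniqueness of limits in $\mathbb{C}$. No gaps.
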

\begin{proof}
This follows immediately from the closed graph theorem. \\Indeed, let $ (a^m, I(a^m)) =\left (a^m,a^m\right ) \in$ Gr($I$) such that $\left (a^m,a^m\right ) \xrightarrow{m \longrightarrow \infty} \left (a,b\right )$.
It suffices to show that $a=b$. From our assumption, $a^m \rightarrow a$ in $X$. 
It follows that $a_n^m \rightarrow a_n$ as $m \rightarrow \infty$ for every $n$.
Similarly from the convergence in $Y$, we have that $a_n^m \rightarrow b_n$ as $m \rightarrow \infty $  for every $n=0,1,2,\dots$ It follows that $a=b$.
\end{proof}

\begin{proposition}\label{2.2}
If, in addition to the assumptions of Proposition 2.1., $X$ is different from $Y$, then $X$ is included in an $F_{\sigma}$ meager subset of $Y$.
\end{proposition}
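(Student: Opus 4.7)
The plan is to combine Proposition~2.1 with the open mapping theorem for $F$-spaces. Proposition~2.1 tells us that the inclusion $I:X\to Y$ is a continuous linear map between $F$-spaces; by hypothesis it is injective but not surjective. The standard corollary of the open mapping theorem (a continuous linear map between $F$-spaces either has meager image in the target, or is surjective and open) therefore forces $I(X)=X$ to be meager in $Y$.

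Once meagerness is in hand, the $F_{\sigma}$ envelope is automatic: if $X=\bigcup_{n=1}^{\infty}N_n$ with each $N_n$ nowhere dense in $Y$, then $\bigcup_{n=1}^{\infty}\overline{N_n}^{\,Y}$ is an $F_{\sigma}$ meager subset of $Y$ containing $X$. A concrete realization of the $N_n$ is available directly, which also makes the argument self-contained: fix a translation-invariant complete $F$-norm $d_X$ on $X$ and set $A_k=\{x\in X:d_X(x,0)\le k\}$, so that $X=\bigcup_{k\ge 1}A_k$. The claim is then that each closure $\overline{A_k}^{\,Y}$ is nowhere dense in $Y$, and the desired $F_{\sigma}$ meager set is $\bigcup_{k\ge 1}\overline{A_k}^{\,Y}$.

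The main obstacle is this nowhere-density. Suppose, for contradiction, that some $\overline{A_{k_0}}^{\,Y}$ has nonempty interior. Using the symmetry $A_{k_0}=-A_{k_0}$ and the subadditivity $A_{k_0}+A_{k_0}\subset A_{2k_0}$ (both consequences of $d_X$ being a translation-invariant $F$-norm), the classical difference trick shows that $\overline{A_{2k_0}}^{\,Y}$ contains a neighborhood of $0$ in $Y$. The iterative step from the proof of the open mapping theorem, which genuinely uses completeness of $X$, then upgrades this to the assertion that some $A_N$ itself contains a $Y$-neighborhood $V$ of $0$. Because scalar multiplication is continuous in an $F$-space, $y/m\to 0$ in $Y$ for every $y\in Y$, so eventually $y/m\in V\subset A_N\subset X$, whence $y=m\cdot(y/m)\in X$. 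Thus $Y\subset X$, contradicting $X\neq Y$. This iteration is the only nontrivial step in the proof; injectivity and continuity of $I$, and the $F$-space structures of $X$ and $Y$, are supplied directly by Proposition~2.1 and the hypotheses.
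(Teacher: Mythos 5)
Your proof is correct and takes essentially the same route as the paper: the paper likewise combines Proposition 2.1 with Banach's version of the open mapping theorem (\cite{rud}, Theorem 2.11), applied to the continuous, linear, non-surjective inclusion $I:X\to Y$, the passage from ``meager'' to ``contained in an $F_{\sigma}$ meager set'' being automatic by taking closures of the nowhere dense pieces. Your unpacking of the difference trick and the completeness iteration is a sound elaboration of what the paper simply cites as a black box.
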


\begin{proof}
This follows from Proposition $2.1.$ and a theorem of Banach which is a version of the open mapping theorem (\cite{rud}, Theorem 2.11), since the inclusion map $I:X\rightarrow Y, \, I(a)=a$, is linear, continuous and not surjective.
\end{proof}

The above find application when $X,Y$ are among the spaces $\ell^p \text{ for } 0<p<\infty, \,\, \bigcap_{p>\alpha} \ell^p
\text{ for } 0 \leq \alpha <\infty, \,\, c_0 \,\, \text{and} \,\, \ell^{\infty}$ (\cite{nestor} and \cite{bern}). In the present paper, we extend this chain by adding the spaces $A^{\infty}(\Di) \subset \bigcap_{p>0} \ell^p$ and $\ell^{\infty} \subset H(\Di) \subset \mathbb{C}^{\mathbb{N}_0}$, where $\Di$ is the open unit disc in $\mathbb{C}$. \\ \\
Let us first recall the definitions.

\begin{definition}
Let $H(\Di)$  be the set of all holomorphic functions on the open unit disc $\Di$ and endow this space with the topology of uniform convergence on compact subsets of $\Di$.\\
We consider $H(\Di)$ as a sequence space, by identifying every function $f(z) = \sum_{n=0}^{\infty}a_n z^n$ with the sequence $a=(a_n)_{n=0}^{\infty}$ of its Taylor coefficients. It is well known that $f\in H(\Di) $ if and only if  $ \limsup_n \left \{ \sqrt[n]{|a_n|} \right \} \leq 1$.
\end{definition}

\begin{definition}
Let $A^{\infty}(\Di)$  be the set of holomorphic functions  $f$ on the open unit disc $\Di$ such that $f$ and all its  derivatives $f^{(l)}$ can be continuously extended on the closed unit disc $\overline{\Di}$.\\
We endow $A^{\infty}(\Di)$ with the natural metric $d(f,g)=\sum_{i=0}^{\infty}\frac{1}{2^i} \frac{\|f^{(i)}-g^{(i)} \|_{\infty}}{1+\|f^{(i)}-g^{(i)} \|_{\infty}}$.\\
As before, we identify every $ f \in A^{\infty}(\Di)$ with the sequence $a=(a_n)_{n=0}^{\infty}$ of its Taylor coefficients. It is easy to see that $ f \in A^{\infty}(\Di)$ if and only if $n^k a_n \xrightarrow{n \to \infty}  0$  for every $k$ in $\mathbb{N}_0.$
\end{definition}

\begin{proposition}\label{2.5}
Convergence in $H(\Di)$ implies pointwise convergence.
\end{proposition}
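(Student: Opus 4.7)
The plan is simply to unfold the definition of the topology on $H(\Di)$ given in Definition 2.3, and to observe that pointwise convergence of the functions on $\Di$ is automatic from it.

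Suppose $f^m \to f$ in $H(\Di)$. Since the topology on $H(\Di)$ is, by definition, the topology of uniform convergence on compact subsets of $\Di$, this means that $\sup_{z\in K}|f^m(z)-f(z)|\to 0$ as $m\to\infty$ for every compact $K\subset\Di$. Given any $z_0\in\Di$, I apply this with the (trivially compact) singleton $K=\{z_0\}$, which immediately yields $|f^m(z_0)-f(z_0)|\to 0$, i.e., $f^m(z_0)\to f(z_0)$. Since $z_0\in\Di$ was arbitrary, $f^m\to f$ pointwise on $\Di$.

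There is essentially no obstacle: the statement is definitional, and the proof is one line. If one prefers not to invoke the degenerate singleton compact set, one can instead apply the hypothesis with $K$ any closed disc centred at $0$ of radius $r$ satisfying $|z_0|<r<1$; this is a genuine compact subset of $\Di$ containing $z_0$, and uniform convergence on it still forces $f^m(z_0)\to f(z_0)$. Either way, no non-trivial complex analysis is required; the only content is that every point of $\Di$ lies in some compact subset of $\Di$.
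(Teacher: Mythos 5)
There is a genuine gap here: you have proved the wrong notion of ``pointwise convergence.'' Throughout this paper $H(\Di)$ is regarded as a \emph{sequence space}, a subspace of $\mathbb{C}^{\mathbb{N}_0}$, via the identification of $f(z)=\sum_{n=0}^\infty a_n z^n$ with the sequence $(a_n)_{n=0}^\infty$ of its Taylor coefficients (see Definition 2.3 and the hypotheses of Propositions 2.1 and 2.2, where pointwise convergence explicitly means $a_n^m\to a_n$ for every coordinate $n$). So Proposition \ref{2.5} asserts that if $f_m\to f$ uniformly on compact subsets of $\Di$, then for each fixed $n$ the $n$-th Taylor coefficient of $f_m$ converges to the $n$-th Taylor coefficient of $f$. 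Your argument with singleton (or small-disc) compact sets only yields $f_m(z_0)\to f(z_0)$ for each point $z_0\in\Di$, which is a different statement and is not what is needed to feed into Proposition 2.2 and Theorem \ref{the212}.

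The missing step is genuinely nontrivial complex analysis, albeit standard: one must pass from locally uniform convergence of the functions to convergence of their Taylor coefficients. The paper does this via the Weierstrass theorem ($f_m^{(n)}\to f^{(n)}$ locally uniformly, hence $a_n^m=f_m^{(n)}(0)/n!\to f^{(n)}(0)/n!=a_n$); an equivalent route is Cauchy's formula $a_n^m=\frac{1}{2\pi i}\int_{|z|=r}\frac{f_m(z)}{z^{n+1}}\,dz$ together with uniform convergence on the circle $|z|=r$. Either of these closes the gap; mere pointwise convergence of the function values, which is all your proof delivers, does not.
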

\begin{proof}
Let $f_m(z)= \sum_{n=0}^{\infty}a_n^m z^n$ be a sequence in $H(\Di)$ that converges to $f(z)=\sum_{n=0}^{\infty}a_n z^n$ in $H(\Di)$.\\
It suffices to show that for every $n \in \mathbb{N}_0$ we have 
$a_n^m \xrightarrow{m \rightarrow \infty} a_n$.\\
By the Weierstrass theorem we have that, for every $n \in \mathbb{N}_0$, $f_m^{(n)}$ converges uniformly to $f^{(n)}$ as $m \to \infty$ on each compact subset of $\Di$.
Thus, in particular, for every $n \in \mathbb{N}_0$,
$$a_n^{m} = \frac{f_m^{(n)}(0)}{n!} \xrightarrow{m \rightarrow \infty} \frac{f^{(n)}(0)}{n!} = a_n$$
\end{proof}

\begin{proposition}\label{2.6}
Convergence in  $A^{\infty}(\Di)$ implies pointwise convergence.
\end{proposition}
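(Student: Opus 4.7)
The plan is to unpack the definition of convergence in the metric $d$ coordinate by coordinate, and then deduce convergence of each individual Taylor coefficient. The key observation is that the series defining $d$ has non-negative terms and weights $1/2^i$, so convergence of the sum to $0$ forces convergence of each term to $0$, hence $\|f_m^{(i)} - f^{(i)}\|_{\infty} \to 0$ for every $i \in \mathbb{N}_0$. The function $t \mapsto t/(1+t)$ is strictly increasing on $[0,\infty)$ with $0 \mapsto 0$, so $t_m/(1+t_m) \to 0$ forces $t_m \to 0$; this justifies the extraction for each fixed $i$.

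Once uniform convergence $\|f_m^{(n)} - f^{(n)}\|_{\infty} \to 0$ on $\overline{\mathbb{D}}$ is in hand for each $n$, I would simply evaluate at $0$: the inequality $|f_m^{(n)}(0) - f^{(n)}(0)| \le \|f_m^{(n)} - f^{(n)}\|_{\infty}$ gives $f_m^{(n)}(0) \to f^{(n)}(0)$. Dividing by $n!$ and using the identifications $a_n^m = f_m^{(n)}(0)/n!$ and $a_n = f^{(n)}(0)/n!$ yields $a_n^m \to a_n$, as required.

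An equivalent shortcut would be to observe that convergence in $A^{\infty}(\mathbb{D})$ implies uniform convergence on $\overline{\mathbb{D}}$ (take just the $i=0$ term), hence uniform convergence on compact subsets of $\mathbb{D}$, and then invoke Proposition~\ref{2.5}. Either route is routine once the structural step above (extracting convergence in each summand from convergence of the weighted series) is made explicit.

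I do not foresee a genuine obstacle: the only subtle point is the elementary fact that $d(f_m,f)\to 0$ in this weighted series metric forces each summand to $0$, which in turn forces the numerator to $0$. The rest is evaluation of holomorphic functions and their derivatives at the origin, which is continuous with respect to the sup norm on $\overline{\mathbb{D}}$.
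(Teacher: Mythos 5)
Your proof is correct, and your ``shortcut'' (take the $i=0$ term to get uniform convergence on $\overline{\Di}$, hence on compact subsets of $\Di$, then invoke Proposition~\ref{2.5}) is exactly the paper's argument. Your more detailed main route, extracting $\|f_m^{(i)}-f^{(i)}\|_{\infty}\to 0$ for every $i$ and evaluating at the origin, is a fine elaboration of the same idea and needs no changes.
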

\begin{proof}
It is obvious that convergence in  $A^{\infty}(\Di)$ implies uniform convergence in $\overline{\Di}$ which 
implies convergence in $H(\Di)$. Thus, by Proposition {\ref{2.5}}, we have pointwise convergence.
\end{proof}

\begin{remark}\label{2.7}
{\rm It is obvious that convergence in either $\ell^{\infty}$ or $\mathbb{C}^{\mathbb{N}_0}$ implies pointwise convergence.}
\end{remark}

\begin{proposition}\label{2.8}
The inclusion $A^{\infty}(\Di) \subset \bigcap_{p>0}\ell^p$ holds, and it is strict.
\end{proposition}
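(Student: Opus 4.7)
The plan is to handle the two assertions separately: first the inclusion, then the strictness via an explicit counterexample.

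For the inclusion $A^{\infty}(\Di) \subset \bigcap_{p>0}\ell^p$, I would fix $f \in A^{\infty}(\Di)$ with Taylor coefficients $(a_n)$ and an arbitrary $p>0$. By the coefficient characterization of $A^\infty(\Di)$ recalled in Definition 2.4, $n^k a_n \to 0$ for every $k \in \mathbb{N}_0$. Choose an integer $k$ so large that $kp > 1$ (for instance $k = \lceil 2/p \rceil$). Then eventually $|a_n| \le 1/n^k$, so $|a_n|^p \le 1/n^{kp}$ from some $n_0$ onward, and the tail $\sum_{n \ge n_0} |a_n|^p$ is dominated by the convergent $p$-series $\sum 1/n^{kp}$. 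Thus $(a_n) \in \ell^p$, and since $p>0$ was arbitrary, $(a_n) \in \bigcap_{p>0}\ell^p$.

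For strictness, I would exhibit an explicit sequence $(a_n) \in \bigcap_{p>0}\ell^p \setminus A^\infty(\Di)$. A clean choice is to let the sequence be supported on a sparse set of indices. Define
\[
a_n = \begin{cases} 2^{-j}, & n = 2^j, \ j \ge 1, \\ 0, & \text{otherwise.} \end{cases}
\]
For any $p > 0$, $\sum_n |a_n|^p = \sum_{j \ge 1} 2^{-jp}$ is a convergent geometric series, so $(a_n) \in \ell^p$ for every $p > 0$. On the other hand, at $n = 2^j$ we have $n^2 a_n = 2^{2j} \cdot 2^{-j} = 2^j \to \infty$ as $j \to \infty$, so $n^2 a_n$ does not tend to $0$, and by Definition 2.4 the corresponding power series does not represent an element of $A^\infty(\Di)$. (Note that it does define an element of $H(\Di)$, since $|a_n|^{1/n} \le 1$, but this is not needed; it suffices that $(a_n) \in \bigcap_{p>0}\ell^p \setminus A^\infty(\Di)$.)

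I do not expect any real obstacle: the first part is a direct consequence of the polynomial-decay characterization of $A^\infty(\Di)$, and the second part reduces to finding a single sequence that decays subpolynomially on a very sparse support. The only subtlety worth flagging is the choice of $k$ depending on $p$, which is what makes the argument work uniformly over all $p > 0$.
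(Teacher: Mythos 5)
Your proof is correct and follows essentially the same route as the paper: the inclusion is obtained by choosing $k$ with $kp>1$ and comparing the tail of $\sum|a_n|^p$ with the convergent series $\sum n^{-kp}$, and strictness is shown by a sequence supported on the powers of $2$ that lies in every $\ell^p$ but violates the polynomial-decay characterization of $A^{\infty}(\Di)$ (the paper uses $a_{2^j}=2^{-j/2}$ and tests $n a_n$; you use $a_{2^j}=2^{-j}$ and test $n^2 a_n$, an immaterial difference).
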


\begin{proof}

 Let $f(z)=\sum_{n=0}^{\infty}a_n z^n \, \in A^{\infty}(\Di)$ , i.e.
$\sum_{n=0}^{\infty} n^k|a_n| < \infty$ for all $k \geq 0$ and let $p >0$. \\
Let $k\in {\mathbb N}$ be such that $kp>1$. \\
We have $n^k |a_n| \to 0 $ so there exists $N>1$ such that $n^k |a_n|<1 $ for every $ n\geq N$. \\
Thus $\sum_{n=0}^{\infty}|a_n|^p \leq \sum_{n=0}^{N-1}|a_n|^p + \sum_{n=N}^{\infty}\left (\frac{1}{n^k}\right )^p< \infty$ since $kp>1$.\\
We now show that the inclusion is strict:\\
Consider the sequence $y=(y_s)$ where
\begin{align*}
    y_s= \left\{ \begin{array}{rcl}
        \sqrt{\frac{1}{s}} & &\mbox{if}\;
         s=2^k \; {\hbox {for   some  k}} \in \mathbb{N} 
        \\ 0 & &\mbox{otherwise} 
        \end{array}\right.
\end{align*}
In other words, $y_{2^k}=\sqrt{\frac{1}{2^k}}$ and $y_s=0$ elsewhere. Then $2^ky_{2^k}=2^{k/2} \rightarrow \infty$
so $y \notin A^{\infty} (D)$.\\
On the other hand, $y \in \bigcap_{p>0} \ell^{p}$ since $\sum_{s=0}^{\infty}|y_s|^p=\sum_{n=1}^{\infty} \left |1 / \sqrt{2} \right |^{np} < \infty$.

\end{proof}

\noindent Next we prove a slightly stronger fact that will be used later.

\begin{remark}\label{rem2.9}
	For every infinite subset $A$ of ${\mathbb{N}_0}$ we can find a sequence  $y \in \bigcap_{p>0}\ell^p \smallsetminus A^{\infty}(\Di)$ which is supported in $A$.
	\end{remark}
\begin{proof}
Let $A=\{l_1, l_2, \dots \}$ where $l_1 < l_2 < \dots$ . We choose $k_1<k_2<\ldots$ such that, for every $n\in{\mathbb N},$  $l_{k_n} \geq 2^n$.\\
We define $y=(y_s)$ by $y_{l_{k_n}}= \sqrt{1/l_{k_n}}$ and $y_s=0$ otherwise.\\
Then $l_{k_n}y_{l_{k_n}}=\sqrt{l_{k_n}} \geq 2^{n/2} \to \infty$ so $y \notin A^{\infty}(\Di)$,
while for every $p>0$ we have:
\begin{align*}
   \sum_{s=0}^{\infty}|y_s|^p \leq  \sum_{n=1}^{\infty}|2^{-{n/2}}|^p < \infty 
\end{align*}
 so $y \in \bigcap_{p>0} \ell^{p}$.
\end{proof}

\begin{proposition}
The inclusion $\ell^{\infty} \subset H(\Di)$ holds and it is strict.
\end{proposition}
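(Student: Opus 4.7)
The plan is to verify the inclusion directly from the characterization of $H(\Di)$ as the set of sequences $(a_n)$ with $\limsup_n \sqrt[n]{|a_n|} \le 1$, and then to exhibit a single sequence witnessing the strictness.

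For the inclusion, I would take $a = (a_n) \in \ell^{\infty}$ and set $M = \|a\|_\infty$. If $M = 0$ the sequence is already in $H(\Di)$. Otherwise $\sqrt[n]{|a_n|} \le \sqrt[n]{M}$, and since $\sqrt[n]{M} \to 1$ as $n \to \infty$, we get $\limsup_n \sqrt[n]{|a_n|} \le 1$. Thus $a \in H(\Di)$.

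For strictness, the simplest candidate is $a_n = n$ (with the convention $a_0 = 0$). This is clearly unbounded, so $a \notin \ell^{\infty}$. On the other hand $\sqrt[n]{n} \to 1$, hence $\limsup_n \sqrt[n]{n} = 1 \le 1$, so the associated function $f(z) = \sum_{n=0}^{\infty} n z^n$ lies in $H(\Di)$ (in fact this is $z/(1-z)^2$).

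There is essentially no obstacle here; both parts reduce to the elementary limit $\sqrt[n]{c} \to 1$ for any fixed positive $c$ and to $\sqrt[n]{n} \to 1$. The whole argument should fit in a few lines and requires nothing beyond the Cauchy--Hadamard characterization already recorded in Definition 2.3.
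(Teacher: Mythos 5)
Your proof is correct and follows essentially the same route as the paper: the inclusion via $\sqrt[n]{|a_n|}\le\sqrt[n]{\|a\|_\infty}\to 1$, and strictness via the unbounded sequence $(n)_n$, which satisfies $\limsup_n\sqrt[n]{n}=1$. The only (harmless) difference is that you treat the case $\|a\|_\infty=0$ separately, which the paper absorbs into the same inequality.
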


\begin{proof}
Let $a=(a_n)_n \in \ell^{\infty}$. Then 
$$\lim\sup\sqrt[n]{|a_n|}\leq \lim\sqrt[n]{||a||_{\infty}}\leq 1.$$
So $a \in H(\Di)$, which implies $\ell^{\infty} \subset H(\Di)$.  \\
Since $\text{limsup} \sqrt[n]{n} = 1$ it follows that the sequence $(n)_n$ is in $H(\Di)$,  but not in $\ell^{\infty}$. \\
So $\ell^{\infty} \varsubsetneq H(\Di)$.
\end{proof}

\begin{proposition}\label{2.11}
The inclusion $H(\Di)\subset \mathbb{C}^{\mathbb{N}_0}$ holds and it is strict.
\end{proposition}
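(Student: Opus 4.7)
The inclusion is essentially by construction: every element of $H(\mathbb{D})$ is identified with its sequence of Taylor coefficients in $\mathbb{C}^{\mathbb{N}_0}$, so the containment is immediate once we recall from Definition 2.3 the characterization $\limsup_n \sqrt[n]{|a_n|}\leq 1$.

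For strictness, the plan is to exhibit a single sequence in $\mathbb{C}^{\mathbb{N}_0}$ whose $n$-th root grows without bound, so that the Cauchy--Hadamard radius of convergence is zero and the corresponding power series defines no holomorphic function on $\mathbb{D}$. A convenient choice is $a_n = n^n$ (with the convention $0^0 = 1$), or equivalently $a_n = n!$; either gives $\sqrt[n]{|a_n|} \to \infty$, hence $\limsup_n \sqrt[n]{|a_n|} = +\infty > 1$, so $(a_n) \notin H(\mathbb{D})$ while trivially $(a_n) \in \mathbb{C}^{\mathbb{N}_0}$.

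There is no real obstacle here; the only thing to be careful about is to pick the witness so that the verification of $\limsup \sqrt[n]{|a_n|} > 1$ is a one-line computation (taking $n$-th roots of $n^n$ is immediate, as is Stirling for $n!$). I would present $a_n = n^n$ because it avoids invoking any auxiliary estimate.
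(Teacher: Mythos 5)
Your proposal is correct and follows essentially the same route as the paper: the inclusion is immediate from the identification with Taylor coefficient sequences, and strictness is witnessed by a sequence whose $n$-th roots are unbounded (the paper uses $(n^{n+1})_n$, you use $(n^n)_n$; both give $\limsup_n\sqrt[n]{|a_n|}=+\infty$).
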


\begin{proof}
It is obvious that $H(\Di) \subset \mathbb{C}^{\mathbb{N}_0}$.  \\
Since $\text{limsup} \sqrt[n]{n^{n+1}} = + \infty $ it follows that $ \left (n^{n+1}\right )_n \notin H(\Di)$. 
 Therefore, $ H(\Di) \varsubsetneq \mathbb{C}^{\mathbb{N}_0}$.
\end{proof}

\begin{theorem}\label{the212}
Consider the chain of spaces $$A^{\infty}(\Di) \varsubsetneq \bigcap_{p>0}\ell ^p \varsubsetneq \ell^a \varsubsetneq \bigcap_{q>a}\ell^q \varsubsetneq \ell^b \varsubsetneq \bigcap_{p>b}\ell^p \varsubsetneq c_0 \varsubsetneq \ell^{\infty} \varsubsetneq H(\Di) \varsubsetneq \mathbb{C}^{{\mathbb{N}}_0}$$
where $a<b$.\\
If $X \varsubsetneq Y$ are two spaces from this chain
then $X$ is contained in an $F_{\sigma}$ meager subset of $Y$.
\end{theorem}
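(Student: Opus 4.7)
The plan is to reduce Theorem \ref{the212} to Proposition \ref{2.2}. For any pair $X \subsetneq Y$ in the chain, I need to check the two hypotheses of that proposition: (i) both $X$ and $Y$ are $F$-spaces that are vector subspaces of $\mathbb{C}^{\mathbb{N}_0}$, and (ii) convergence in either $X$ or $Y$ implies pointwise (coordinatewise) convergence. Once both are verified, Proposition \ref{2.2} immediately gives that $X$ sits in an $F_\sigma$ meager subset of $Y$.

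For the pointwise-convergence assumption, the work has already been done: Propositions \ref{2.5} and \ref{2.6} handle $H(\Di)$ and $A^\infty(\Di)$, Remark \ref{2.7} handles $\ell^\infty$ and $\mathbb{C}^{\mathbb{N}_0}$, and for the $\ell^p$ spaces, the intersections $\bigcap_{p>\alpha}\ell^p$, and $c_0$ the statement is classical and was recorded in \cite{nestor,bern}. So I would just cite these results one by one.

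For the $F$-space assumption, I would go through the spaces in the chain. The spaces $\ell^p$ ($0<p\le\infty$), $c_0$, and $\bigcap_{p>\alpha}\ell^p$ (endowed with the Fréchet metric built from the $\ell^{p_n}$-quasi-norms for any sequence $p_n \downarrow \alpha$) are $F$-spaces by the standard arguments already used in \cite{nestor,bern}. The space $\mathbb{C}^{\mathbb{N}_0}$ with its product topology is a Fréchet space. For $H(\Di)$, completeness in the topology of uniform convergence on compacta is a classical consequence of the Weierstrass theorem; a compatible metric is obtained from an exhaustion of $\Di$ by compact discs. For $A^\infty(\Di)$, the explicit metric $d$ of Definition 2.4 is translation-invariant, and completeness follows because a $d$-Cauchy sequence $(f_m)$ is Cauchy in each $\|\cdot\|_\infty$-norm $\|f_m^{(i)}-f_k^{(i)}\|_\infty$, hence converges uniformly on $\overline{\Di}$ together with all derivatives, and the uniform limits assemble into a function in $A^\infty(\Di)$.

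With both hypotheses in place, I invoke Proposition \ref{2.2} for the given pair $(X,Y)$, noting that the chain inclusions are strict by Propositions \ref{2.8}--\ref{2.11} together with the strictness results of \cite{nestor,bern}. I do not expect any real obstacle: the theorem is essentially a catalogue check, and the only mildly non-routine item is verifying completeness of $A^\infty(\Di)$ with the specific Fréchet metric in Definition 2.4, which is still a standard diagonal argument.
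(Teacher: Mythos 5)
Your proposal is correct and follows essentially the same route as the paper: the paper's proof of Theorem \ref{the212} is precisely the one-line reduction to Proposition \ref{2.2} via Propositions \ref{2.5}, \ref{2.6} and Remark \ref{2.7}, with the $F$-space and pointwise-convergence facts for the $\ell^p$-type spaces taken from \cite{nestor} and \cite{bern}. Your additional verifications (completeness of $A^{\infty}(\Di)$ and of the other spaces, strictness of the inclusions via Propositions \ref{2.8}--\ref{2.11}) simply make explicit what the paper leaves implicit.
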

\begin{proof}
This follows by a combination of Propositions \ref{2.2},  \ref{2.5},  \ref{2.6} and \ref{2.7}.  
\end{proof}

\section{A Constructive Approach}\label{constructive}

In the previous section we showed that if $X \varsubsetneq Y$ are spaces as in theorem {\ref{the212}} then X is contained in an $F_{\sigma}$ meager subset of Y. \\
In this section we examine whether X is itself an $F_{\sigma}$ meager subset of Y. Our method will be constructive.
At the same time we  obtain a new proof of theorem {\ref{the212}} without using Banach's theorem.
 \\
\begin{proposition}\label{3.1}
Let $X = A^{\infty}(\Di)$ and $Y$ be a space from the chain of theorem {\ref{the212}} such that $X \varsubsetneq Y$. Then $X$ is an $F_{\sigma \delta}$ subset of $Y$.
\end{proposition}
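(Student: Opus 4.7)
The plan is to unfold the definition of $A^\infty(\Di)$ into an explicit countable intersection of $F_\sigma$ subsets of $Y$. Recall from Definition 2.4 that $a = (a_n)_{n=0}^\infty \in A^\infty(\Di)$ if and only if $n^k a_n \to 0$ for every $k \in \mathbb{N}_0$. Quantifying the limit, this is equivalent to saying that for every $k, j \in \mathbb{N}$ there exists $N \in \mathbb{N}$ with $|n^k a_n| \leq 1/j$ for all $n \geq N$. This immediately yields the identity
\[
A^\infty(\Di) \;=\; \bigcap_{k=1}^\infty \, \bigcap_{j=1}^\infty \, \bigcup_{N=1}^\infty F_{k,j,N},
\]
where $F_{k,j,N} := \bigl\{ a \in Y : |a_n| \leq 1/(j n^k) \text{ for every } n \geq N\bigr\}$.

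Next I would show that each $F_{k,j,N}$ is closed in $Y$. By Propositions \ref{2.5}, \ref{2.6} and Remark \ref{2.7}, together with the trivial cases of the $\ell^p$, $c_0$ and $\bigcap_{p>\alpha}\ell^p$ spaces, convergence of a sequence in $Y$ implies pointwise convergence; equivalently, each coordinate functional $a \mapsto a_n$ is sequentially continuous on the $F$-space $Y$, and hence continuous. Therefore $\{a \in Y : |a_n| \leq 1/(j n^k)\}$ is closed for every fixed $n$, and $F_{k,j,N}$ is the intersection of such closed sets over $n \geq N$, hence itself closed.

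Combining the two observations, $\bigcup_{N=1}^\infty F_{k,j,N}$ is $F_\sigma$ for each $k, j$, and $A^\infty(\Di)$ is a countable intersection (indexed by $\mathbb{N} \times \mathbb{N}$) of $F_\sigma$ sets, i.e.\ an $F_{\sigma\delta}$ subset of $Y$, as required. There is essentially no serious obstacle in this argument: the only nontrivial input is the continuity of coordinate evaluations on the $F$-space $Y$, which is precisely what Section \ref{topological} has set up. A minor technical point worth noting is that $\mathbb{N} \times \mathbb{N}$ is countable, so the double intersection is legitimately expressible as a single countable intersection, keeping us within the class $F_{\sigma\delta}$.
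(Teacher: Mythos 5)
Your proof is correct, and it follows the same overall strategy as the paper: write $A^{\infty}(\Di)$ as a countable intersection of countable unions of sets cut out by coordinatewise inequalities, and deduce closedness of those sets from the fact (established in Section \ref{topological}) that convergence in $Y$ implies pointwise convergence. The only difference is the choice of decomposition. You unfold the $\epsilon$--$N$ definition of $n^k a_n \to 0$ literally, which costs an extra index $j$ and gives $\bigcap_{k}\bigcap_{j}\bigcup_{N} F_{k,j,N}$; this is mechanical and requires no cleverness beyond checking that $\{a \in Y : |a_n|\le 1/(jn^k)\}$ is closed for each fixed $n$. The paper instead uses the sets $F_M^k=\{a\in Y : n^k|a_n|\le M \ \forall n\}$ and the identity $A^{\infty}(\Di)=\bigcap_{k}\bigcup_{M}F_M^k$, which rests on the slightly less obvious equivalence that $(n^k a_n)_n$ is bounded for every $k$ if and only if $n^k a_n\to 0$ for every $k$ (boundedness of $n^{k+1}|a_n|$ forces $n^k|a_n|\le M/n\to 0$). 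The paper's version buys something concrete: the single $F_\sigma$ set $\bigcup_M F_M^1$ containing $A^{\infty}(\Di)$ is reused in the subsequent remark to give a Banach-theorem-free proof of meagerness, whereas your sets $\bigcup_N F_{k,j,N}$ would serve the same purpose only after observing that one of them is a proper subspace. Both arguments are complete and yield the $F_{\sigma\delta}$ conclusion; your dropping of $k=0$ is harmless since $na_n\to 0$ implies $a_n\to 0$.
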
 

\begin{proof}
For $k\in {\mathbb N}_0$, $M\in {\mathbb N},$ let $F_M^k = \left \{ a=(a_n) \in Y  \ | \ n^k|a_n| \leq M \ \forall n\in {\mathbb N}_0 \right \} $. It is clear that $X = A^{\infty}(\Di) = \bigcap_{k=0}^{\infty} \bigcup _{M=1}^{\infty} F_M^k \subset \bigcup_{M=1}^{\infty}F_M^1$ and it remains to show that the sets $F_M^k$ are closed in $Y$. 
 Indeed, fix $k$ and $M$ and let $(a^m)$ be a sequence in $F_M^k$, such that $a^m \xrightarrow{m \to \infty}a$ in $Y$ and thus 
    $a^m_n \xrightarrow{m \to \infty}a_n$ for all $n \in \mathbb{N}_0$. 
    Then for all $n \in \mathbb{N}$ $n^k|a_n^m| \leq M $ and by taking the limit as $m$ goes to $\infty$ we have $ n^k|a_n| \leq M$ which implies that $a \in F_M^k$. This completes the proof.

\end{proof}

\begin{remark}
    {\rm The proof of {\ref{3.1}} can be used to give a new proof of the fact that $A^{\infty}(\Di)$ is contained in an $F_{\sigma}$ meager subset of $Y$. It suffices to show that $\bigcup_{M=1}^{\infty} F_M ^1$ has empty interior in $Y$. Indeed, it is obvious that
   $\bigcup_{M=1}^{\infty}F_M^1$ is a  vector subspace of $Y$. To see that it is a proper subspace, notice that the sequence $y=(y_n)$ of Proposition
     {\ref{2.8}} is in $\bigcap_{p>0}\ell^p \subset Y$ and $y \notin \bigcup_{M=1}^{\infty} F_M^1$ because $(n y_n)$ is not bounded. 
   It follows that $\bigcup_{M=1}^{\infty}F_M^1$ has empty interior in $Y$. \\
    We mention that all cases of Theorem \ref{the212} can be derived by the method of section \red{\ref{constructive}} without using Banach's Theorem. We will not insist on this point.}
    
\end{remark}

\begin{proposition} 
    Let $X = \ell^p$ for $p>0$ and $Y$ be a space from the chain of theorem 2.12 such that $X \varsubsetneq Y$. Then $X$ is an $F_{\sigma}$ meager subset of $Y$.
\end{proposition}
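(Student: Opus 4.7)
The plan is to mimic the decomposition idea of Proposition \ref{3.1}. For each $M \in \mathbb{N}$, define
\[ F_M = \Bigl\{ a = (a_n) \in Y : \sum_{n=0}^{\infty} |a_n|^p \le M \Bigr\}, \]
so that $\ell^p = \bigcup_{M=1}^{\infty} F_M$ inside $Y$. It then suffices to prove that each $F_M$ is closed in $Y$ and has empty interior in $Y$: the first gives the $F_\sigma$ property, and the second gives meagerness.

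For closedness, I would take a sequence $(a^m)$ in $F_M$ converging to $a$ in $Y$. Every space $Y$ in the chain of Theorem \ref{the212} has the property that convergence implies pointwise convergence, either by Propositions \ref{2.5}, \ref{2.6} and Remark \ref{2.7} directly, or via Proposition 2.1 combined with the inclusion into one of those spaces. Hence $a^m_n \to a_n$ for each $n$, and for each finite $N$,
\[ \sum_{n=0}^{N} |a_n|^p = \lim_{m\to\infty} \sum_{n=0}^{N} |a^m_n|^p \le M; \]
letting $N \to \infty$ yields $a \in F_M$.

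For empty interior, I would exploit that each $F_M$ is symmetric and that a form of the triangle inequality puts $a-b$ into some larger $F_{C_pM}$ whenever $a,b \in F_M$: take $C_p = 2^p$ when $p \ge 1$ via Minkowski's inequality, and $C_p = 2$ when $0 < p < 1$ via the elementary inequality $|x+y|^p \le |x|^p + |y|^p$. Suppose for contradiction that some $F_M$ has an interior point $a^0$; then there is an open neighborhood $U$ of $0$ in $Y$ with $a^0 + U \subset F_M$, and consequently $U \subset F_{C_pM}$. Since $U$ is absorbing in the $F$-space $Y$, every $c \in Y$ satisfies $\lambda c \in U$ for some $\lambda > 0$, forcing $\sum_n |\lambda c_n|^p \le C_pM$ and therefore $c \in \ell^p$. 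This gives $Y \subset \ell^p$, contradicting $\ell^p \subsetneq Y$.

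I expect the only nontrivial bookkeeping to be the split into the regimes $0<p\le 1$ and $p\ge 1$ in the ``$\ell^p$ triangle inequality'' step; apart from this, the argument runs parallel to Proposition \ref{3.1} and its accompanying remark, with the absorbing/scaling step replacing the proper-subspace observation used there.
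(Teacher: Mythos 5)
Your proposal is correct and follows essentially the same route as the paper: the same decomposition $\ell^p=\bigcup_M F_M$ with $F_M$ closed via pointwise convergence of the partial sums. The only cosmetic difference is that the paper gets meagerness by simply invoking the fact that $\ell^p$, being a proper linear subspace of $Y$, has empty interior, whereas you re-derive that fact for each $F_M$ by hand via the quasi-triangle inequality and the absorbing property of neighborhoods of $0$ --- a correct but unnecessary unpacking of the same standard argument.
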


\begin{proof}
It suffices to write $\ell^p = \bigcup_{M=1}^{\infty}\{a = (a_n)_{n=0}^{\infty} \in Y \ | \ \sum_{n=0}^N |a_n|^p \leq M \ \forall N\in {\mathbb N} \} $
as in \cite{nestor}.
Thus,  the set $\ell^p$, being a proper vector subspace of $Y$ has empty interior in $Y$ and is equal to a countable union of closed sets in $Y$.

\end{proof}

\begin{proposition}
  Let $X = \bigcap_{p>c}\ell^p$ for $c\geq0$ and $Y$ be a space from the chain of theorem {\ref{the212}} such that $X \varsubsetneq Y$. Then $X$ is an $F_{\sigma\delta}$ subset of $Y$.
\end{proposition}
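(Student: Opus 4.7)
The plan is to imitate the structure of Proposition 3.1 and the preceding proposition on $\ell^p$, but with one extra layer to accommodate the intersection over $p>c$. First, I would rewrite the intersection over a continuous parameter as a countable intersection:
\[
\bigcap_{p>c}\ell^p \;=\; \bigcap_{n=1}^{\infty}\ell^{c+1/n}.
\]
The containment $\subset$ is immediate since each $c+1/n>c$. For the reverse, given any $p>c$, pick $n$ with $c+1/n\leq p$; using the standard inclusion $\ell^{c+1/n}\subset\ell^{p}$ for sequence spaces, the right-hand side sits inside every $\ell^p$ with $p>c$.

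Next, for each $q=c+1/n$ and each $M\in\mathbb{N}$, define
\[
F_M^{q}\;=\;\Bigl\{a=(a_k)_{k=0}^{\infty}\in Y\ \Big|\ \sum_{k=0}^{N}|a_k|^{q}\leq M\text{ for every }N\in\mathbb{N}_0\Bigr\},
\]
so that $\ell^{q}\cap Y=\bigcup_{M=1}^{\infty}F_M^{q}$, exactly as in the preceding proof for $\ell^p$. Combining this with the first step gives
\[
X\;=\;\bigcap_{n=1}^{\infty}\bigcup_{M=1}^{\infty}F_M^{c+1/n},
\]
which is of $F_{\sigma\delta}$ form, provided each $F_M^{q}$ is closed in $Y$.

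To verify closedness, I would invoke the pointwise-convergence property of the topology on $Y$ established in Propositions \ref{2.5}, \ref{2.6} and Remark \ref{2.7} (which covers all spaces $Y$ in the chain). If $(a^{m})\subset F_M^{q}$ converges to $a$ in $Y$, then $a_k^{m}\to a_k$ for every $k$, so for each fixed $N$ the continuous map $(a_0,\ldots,a_N)\mapsto\sum_{k=0}^{N}|a_k|^{q}$ yields
\[
\sum_{k=0}^{N}|a_k|^{q}\;=\;\lim_{m\to\infty}\sum_{k=0}^{N}|a_k^{m}|^{q}\;\leq\;M.
\]
Since this holds for every $N$, we get $a\in F_M^{q}$, so $F_M^{q}$ is closed.

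The only mild subtlety is the first step, identifying the uncountable intersection with a countable one; everything else is routine once the pointwise-convergence lemmas from Section \ref{topological} are in hand. I do not see a genuine obstacle here — the argument is a direct extension of the method already used for $A^{\infty}(\Di)$ and for $\ell^p$.
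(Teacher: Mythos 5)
Your proposal is correct and follows exactly the paper's route: write $\bigcap_{p>c}\ell^p=\bigcap_{n=1}^{\infty}\ell^{c+1/n}$ and then use the $F_\sigma$ decomposition of each $\ell^q$ in $Y$ already established in the preceding proposition. The paper states this in three lines and leaves the details (the countable-intersection identification and the closedness of the sets $F_M^q$) implicit, whereas you spell them out; there is no substantive difference.
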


\begin{proof}
Let $p_n = c+\frac{1}{n}$.
We have $\bigcap _{p>c}\ell^p = \bigcap_{n=1}^{\infty}\ell^{p_n}$. Since $\ell^p$ is $F_σ$ in $Y$, it follows that $X$ is $F_{σδ}$ in $Y$.
\end{proof}

\begin{remark}
{\rm Obviously $c_0$ is closed in $\ell^{\infty}$.}
\end{remark}

\begin{proposition}
Let $X = c_0$ and $Y=H(\Di)$ or $\mathbb{C}^{\mathbb{N}_0}$. Then $X$ is $F_{\sigma\delta}$ in $Y$.
\end{proposition}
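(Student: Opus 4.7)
The plan is to express $c_0$ in the standard form that exhibits it as a countable intersection of countable unions of closed sets, using pointwise convergence as the key tool.

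First I would write
\[
c_0 = \bigcap_{k=1}^{\infty} \bigcup_{N=1}^{\infty} F_{k,N}, \qquad F_{k,N} = \left\{ a = (a_n) \in Y : |a_n| \leq \tfrac{1}{k} \text{ for all } n \geq N \right\}.
\]
This equality is just the quantitative restatement of $a_n \to 0$: a sequence tends to zero iff for every $k$ there exists $N$ such that all tails from index $N$ onward are bounded by $1/k$. Note that $c_0 \subset Y$ is automatic since $c_0 \subset \ell^{\infty} \subset H(\mathbb{D}) \subset \mathbb{C}^{\mathbb{N}_0}$.

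Next I would verify that each $F_{k,N}$ is closed in $Y$. Suppose $(a^m) \subset F_{k,N}$ and $a^m \to a$ in $Y$. By Proposition \ref{2.5} (when $Y = H(\mathbb{D})$) or by the trivial observation for $\mathbb{C}^{\mathbb{N}_0}$ (Remark \ref{2.7}), convergence in $Y$ implies pointwise convergence, hence $a_n^m \to a_n$ for every $n \in \mathbb{N}_0$. Fixing $n \geq N$ and passing to the limit in $|a_n^m| \leq 1/k$ yields $|a_n| \leq 1/k$, so $a \in F_{k,N}$.

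Consequently $\bigcup_{N=1}^{\infty} F_{k,N}$ is $F_\sigma$ in $Y$ for every fixed $k$, and $c_0$ is the countable intersection of these $F_\sigma$ sets, i.e.\ an $F_{\sigma\delta}$ subset of $Y$. There is no real obstacle here: the whole argument reduces to the observation that the pointwise-convergence hypothesis built into $Y$ lets us pass inequalities to the limit coordinate by coordinate, which is exactly what is needed to close each $F_{k,N}$.
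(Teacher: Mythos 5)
Your proof is correct and is essentially identical to the paper's: the same decomposition $c_0=\bigcap_k\bigcup_N F_{k,N}$ with tails bounded by $1/k$, and the same closedness argument via pointwise convergence (Proposition \ref{2.5} and Remark \ref{2.7}). No issues.
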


\begin{proof}
$X=\bigcap_{k=1}^{\infty} \bigcup_{n=1}^{\infty}F_n^k$ where $F_n^k= \{a=(a_s) \in Y : |a_s|\leq \frac{1}{k} \ \forall s \geq n \}$.\\
$F_n^k$ are closed in $Y$. Indeed, fix $n,k \in \mathbb{N}$.\\
Let $a^m , \ m=1,2,\dots$ be a sequence in $F_n^k$, such that $a^m \xrightarrow{m \to \infty}a$ in $Y$. According to proposition {\ref{2.5}} and remark {\ref{2.7}} we have 
    $a^m_n \xrightarrow{m \to \infty}a_n$, for all $n \in \mathbb{N}_0$.
Then, for all $s \geq n, \  |a_s^m|\leq \frac{1}{k}$ and by taking the limit as $m$ goes to $\infty$ we have, for all $s \geq n, \  |a_s|\leq \frac{1}{k}$ which implies that $a \in F_n^k$.    
\end{proof}

\begin{proposition} Let $X = \ell^{\infty}$ and $Y$ be a space from the chain of theorem {\ref{the212}} such that $X \varsubsetneq Y$. Then $X$ is an $F_{\sigma}$ subset of $Y$.
\end{proposition}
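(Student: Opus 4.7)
The plan is to mirror the template used throughout this section: write $\ell^\infty$ as a countable union of ``bounded by $M$'' sets and show each is closed in $Y$. Concretely, for each $M \in \mathbb{N}$ I would define
\[
B_M = \left\{ a = (a_n)_{n=0}^{\infty} \in Y \;:\; |a_n| \leq M \text{ for every } n \in \mathbb{N}_0 \right\},
\]
so that trivially $\ell^{\infty} = \bigcup_{M=1}^{\infty} B_M$. The task then reduces to verifying that each $B_M$ is closed in $Y$.

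To check closedness, I would take a sequence $(a^m)$ in $B_M$ converging to some $a$ in $Y$. The only possibilities for $Y$ in the chain that strictly contain $\ell^\infty$ are $H(\Di)$ and $\mathbb{C}^{\mathbb{N}_0}$, so convergence $a^m \to a$ in $Y$ implies coordinatewise convergence $a_n^m \to a_n$ by Proposition \ref{2.5} or Remark \ref{2.7}. Since $|a_n^m| \leq M$ for every $m$ and every $n$, passing to the limit in $m$ yields $|a_n| \leq M$ for every $n$, i.e., $a \in B_M$. This gives the $F_\sigma$ representation.

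I do not expect a serious obstacle here: the argument is essentially the same closedness-via-pointwise-convergence reasoning already used in Propositions \ref{3.1} and the $c_0$ proposition above. The one thing to be careful about is merely to note which spaces $Y$ are relevant, so that the correct pointwise-convergence lemma is invoked in each case.
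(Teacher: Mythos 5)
Your proof is correct and follows essentially the same route as the paper's: the same decomposition $\ell^{\infty}=\bigcup_{M=1}^{\infty}\{a\in Y: |a_n|\leq M \ \forall n\}$ and the same closedness argument via pointwise convergence. No issues.
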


\begin{proof}
 Let $F_M = \left \{a=(a_n) \ \in Y \ : \ |a_n| \leq M \; {\rm for\; all \;} n\in \mathbb{N}_0 \right \}$. Obviously $X = \bigcup_{M=1}^{\infty}F_M$. We will show that each set $F_M$ is closed in Y.\\
     Indeed, let $a^m$ be a sequence in $F_M$ such that $a^m \xrightarrow{m \rightarrow \infty} a,$ for some $a\in Y$.
     Convergence in $Y$ implies pointwise convergence, that is $a_n^m \xrightarrow{m \rightarrow \infty} a_n$ for every $n \in \mathbb{N}_0$. Since  $|a^m_n|\leq M$ for all $n\in {\mathbb N}_0$ and $m\in {\mathbb N}$,  it follows that $|a_n|\leq M$ for all $n\in {\mathbb N}_0$. Thus, $a \in F_M$.
\end{proof}

\begin{proposition}
      Let $X = H(\Di)$ and $Y = \mathbb{C}^{\mathbb{N}_0}$. Then $X$ is an $F_{\sigma \delta}$ subset of $Y$.
\end{proposition}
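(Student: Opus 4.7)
The plan is to mimic the approach used in Propositions~3.1 and the proposition for $c_0$: unwind the defining $\limsup$-inequality for $H(\Di)$ into a countable intersection of countable unions of closed sets.

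The starting point is the characterization $f=\sum a_n z^n \in H(\Di) \iff \limsup_n \sqrt[n]{|a_n|}\leq 1$. I would rewrite this condition quantifier-by-quantifier: $\limsup_n \sqrt[n]{|a_n|}\leq 1$ holds if and only if for every $k\in {\mathbb N}$ there exists $N\in {\mathbb N}$ such that $|a_n|\leq \left(1+\tfrac1k\right)^n$ for all $n\geq N$. For each pair $(k,N)$, define
\[
F_N^k=\Bigl\{a=(a_n)\in \mathbb{C}^{\mathbb{N}_0}\ :\ |a_n|\leq \bigl(1+\tfrac1k\bigr)^n\ \text{for all } n\geq N\Bigr\}.
\]
Then the rewrite gives the identity $H(\Di)=\bigcap_{k=1}^\infty \bigcup_{N=1}^\infty F_N^k$, which is the desired $F_{\sigma\delta}$ representation once we know each $F_N^k$ is closed in $Y=\mathbb{C}^{\mathbb{N}_0}$.

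Next I would verify that $F_N^k$ is closed. Take a sequence $(a^m)$ in $F_N^k$ with $a^m\to a$ in $\mathbb{C}^{\mathbb{N}_0}$. By Remark~\ref{2.7}, convergence in $\mathbb{C}^{\mathbb{N}_0}$ is pointwise convergence, so $a_n^m\to a_n$ for each $n$. For $n\geq N$ we have $|a_n^m|\leq (1+\tfrac1k)^n$ for every $m$; passing to the limit yields $|a_n|\leq (1+\tfrac1k)^n$, so $a\in F_N^k$.

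There is no serious obstacle here; the only thing to be careful about is the correct logical form of $\limsup\leq 1$: one needs the "eventually bounded by $1+\tfrac1k$" version (both quantifiers) and not the weaker "bounded by $1+\tfrac1k$ for infinitely many $n$," so that the inner collection $\bigcup_{N} F_N^k$ genuinely captures the eventual bound. Putting the two pieces together — the set identity and the closedness of each $F_N^k$ — immediately shows $H(\Di)$ is an $F_{\sigma\delta}$ subset of $\mathbb{C}^{\mathbb{N}_0}$.
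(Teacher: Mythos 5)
Your proof is correct and follows essentially the same route as the paper: the paper uses the identical decomposition $H(\Di)=\bigcap_{j}\bigcup_{k}F_k^j$ with $F_k^j=\{a:\sqrt[n]{|a_n|}\leq 1+\tfrac1j\ \forall n\geq k\}$ (your $|a_n|\leq(1+\tfrac1k)^n$ is just this inequality raised to the $n$-th power) and the same closedness argument via pointwise convergence. Your explicit remark about needing the ``eventually'' form of $\limsup\leq 1$ rather than the ``infinitely often'' form is a sensible point of care, but there is nothing further to add.
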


\begin{proof}
$X = H(\Di) = \bigcap_{j=1}^{\infty}\bigcup_{k=1}^{\infty} F_k^j $, where $F_k^j = \left \{a= (a_n) \in \mathbb{C}^{\mathbb{N}_0} \ | \ \sqrt[n]{|a_n|} \leq 1+ \frac{1}{j}  \ \forall n\geq k \right \} $.\\
$F_k^j$ are closed in $Y$. Indeed, fix $j,k \in \mathbb{N}$.\\
Let $a^m , \  m=1,2, \dots$ be a sequence in $F_k^j$ such that $a^m \xrightarrow{m \to \infty}a$ in $Y$, so that 
    $a^m_n \xrightarrow{m \to \infty}a_n$ for all $n \in \mathbb{N}_0$.
Then for all $n \geq k, \  \sqrt[n]{|a_n^m|} \leq 1+ \frac{1}{j}  $ and by taking the limit as $m$ goes to $\infty$ we have for all $n \geq k, \   \sqrt[n]{|a_n|} \leq 1+ \frac{1}{j}$ which implies that $a \in F_k^j$.
\end{proof}

\begin{remark}
	{\rm  The proof of Proposition 3.8 gives that $X=H(D)\subseteq \bigcup_{k=1}^{\infty}F_k^1\subseteq Y={\mathbb C}^{{\mathbb N}_0}$ where the set $\bigcup_{k=1}^{\infty}F_k^1$ is an $F_{\sigma}$-meager subset of $Y$. In other words, $Y\setminus X$ contains the complement of $\bigcup_{k=1}^{\infty}F_k^1$ which is a $G_{\delta}$-dense subset of $Y$. We mention that $Y\setminus X$ also contains the set of sequences $(a_n)$ with the property that the power series $\sum_{n=0}^{\infty}a_nz^n$ has 0 radius of convergence or where $\sum_{n=0}^{\infty}a_nz^n$ is a universal power series of Seleznev. It is known that these two last sets are $G_{\delta}$-dense subsets of $Y={\mathbb C}^{{\mathbb N}_0}$ (\cite{bay}). A series $\sum_{n=0}^{\infty}a_nz^n$ is a universal power series of Seleznev if its partial sums approximate uniformly every polynomial on any compact set $K\subset {\mathbb C}\setminus \{0\}$ with connected complement.}
\end{remark}

\begin{remark}
    {\rm In the cases where we show that $X$ is an $F_{σδ}$ in $Y$, we believe that this result can not be improved, that is $X$ is not an $F_σ$ subset of $Y$. This is true in particular in the case where $X=\bigcap_{p>a}\ell_p$ and $Y$ is equal to either $\ell^{b}$ or $\bigcap_{q>b}\ell_q$ for some $0<a<b<\infty$ as shown by Gregoriades in \cite{greg}.}
\end{remark}

\section{Algebraic Genericity} \label{algebraic}

In continuation to the previous project \cite{nestor} we examine whether there is algebraic genericity for a couple of spaces $(X,Y)$, where $X\subsetneq Y$ are spaces belonging to the chain of Theorem \ref{the212}.\\ \\
We recall the  definition:

\begin{definition}
Let $X,Y$ be F-spaces, with $X \subset Y $ and $X \neq Y$. We say that we have algebraic genericity for the couple $(X,Y)$ if there is a vector subspace $F$ of $Y$ dense in $Y$, such that $F \smallsetminus \{0\} \subset Y \smallsetminus X$ 
\end{definition}

\noindent The main result is that if $X$ and $Y$ are two spaces belonging to the chain of theorem 2.12 then we have algebraic genericity for the couple $(X,Y)$. Here we deal with the case $Y\neq \ell^{\infty}$. When $Y = \ell^{\infty}$ the proof,  due to Papathanasiou \cite{pap}, follows a different method since $\ell^{\infty}$ is non separable.

\begin{lemma}\label{lem4.2}
Let $X,Y$ be F-spaces, such that:
\begin{enumerate}
    \item $c_{00}\subset X\subset Y \subset \mathbb{C}^{\mathbb{N}_0}$, $X \neq Y$
    \item If $A\subset \mathbb{N}_0$ is infinite, there exists $y \in Y\smallsetminus X$  supported in $A$.
    \item $c_{00}$ is dense in $Y$ 
    \item For every $a \in X$ and  $A \subset \mathbb{N}_0$ the product $a \chi_A$ belongs to $X$. 
\end{enumerate}
Then we have algebraic genericity for the pair $(X,Y)$
\end{lemma}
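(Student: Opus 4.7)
The plan is the standard ``Bernal--perturbation'' construction: build $F$ as the span of countably many vectors obtained by perturbing a dense set in $c_{00}$ by tiny elements of $Y\setminus X$ with pairwise disjoint supports. Concretely, first partition $\mathbb{N}_0 = \bigsqcup_{n\geq 1} A_n$ into pairwise disjoint infinite subsets. For each $n$, apply hypothesis (2) to produce $y_n \in Y\setminus X$ supported in $A_n$. Using continuity of scalar multiplication in the $F$-space $Y$, rescale $y_n \mapsto \lambda_n y_n$ with $\lambda_n>0$ small enough that $d_Y(\lambda_n y_n,0)<1/n$; since $X$ is a vector space and $\lambda_n\neq 0$, the rescaled vector is still in $Y\setminus X$, so rename it $y_n$. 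Next choose a countable subset $\{x_n\}_{n\geq 1}\subset c_{00}$ that is dense in $Y$ (hypothesis (3) guarantees $c_{00}$ is dense and $c_{00}$ itself is separable), arranged so that each element of a fixed countable dense family appears infinitely often along the sequence. Finally set
\[
f_n := x_n + y_n, \qquad F := \operatorname{span}_{\mathbb C}\{f_n : n\geq 1\}.
\]

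For density of $F$ in $Y$, given $y\in Y$ and $\delta>0$ I pick $N$ with $1/N<\delta/2$ and then some $n\geq N$ with $d_Y(x_n,y)<\delta/2$ (available by the ``infinitely often'' enumeration). Translation invariance of the $F$-space metric gives $d_Y(f_n,x_n)=d_Y(y_n,0)<1/n\leq 1/N<\delta/2$, so $d_Y(f_n,y)<\delta$. Since each $f_n\in F$, this shows $F$ is dense in $Y$.

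The core of the argument, and what I expect to be the main obstacle, is showing $F\cap X = \{0\}$. Suppose $f = \sum_{i=1}^{k} c_i f_{n_i}\in X$ with the $n_i$ distinct. Split it as $f = u + v$, where $u = \sum_{i=1}^{k} c_i x_{n_i}\in c_{00}\subset X$ by hypothesis (1), and $v = \sum_{i=1}^{k} c_i y_{n_i}$. Then $v = f-u \in X$. Because the supports $A_{n_i}$ are pairwise disjoint and $y_{n_i}$ is supported in $A_{n_i}$, multiplying by the characteristic sequence of $A_{n_j}$ collapses the sum: $v\,\chi_{A_{n_j}} = c_j y_{n_j}$. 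Hypothesis (4) applied to $v\in X$ and $A = A_{n_j}$ gives $c_j y_{n_j} = v\,\chi_{A_{n_j}}\in X$. But $y_{n_j}\notin X$, so since $X$ is a vector space we must have $c_j=0$. This holds for every $j$, forcing $f=0$. Consequently $F\setminus\{0\}\subset Y\setminus X$, completing the proof.

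The only subtle points are ensuring the enumeration $\{x_n\}$ has dense tails (needed so the perturbations do not spoil density) and making sure the hypotheses are invoked in the right spots: hypothesis (2) to construct $y_n$, (3) to make $\{x_n\}$ dense in $Y$, (1) to place $u$ in $X$, and the pointwise/coordinate multiplication stability (4) to peel off individual $c_j y_{n_j}$ from $v$. The disjoint-support trick in the last step is the heart of the matter.
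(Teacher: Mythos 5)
Your proof is correct and follows essentially the same construction as the paper: perturb a countable dense subset of $c_{00}$ by small disjointly supported elements of $Y\setminus X$ and span. The only cosmetic differences are that you arrange density by repeating each dense element infinitely often (the paper instead invokes that $Y$ has no isolated points), and in the step $F\cap X=\{0\}$ you subtract the $c_{00}$-part $u$ before applying hypothesis (4), where the paper multiplies by $\chi_{A_{j_0}\cap[N,\infty)}$ to kill it; both variants are sound.
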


\begin{proof}
Since $c_{00}$ is dense in Y it follows that $c_{00}\cap (\mathbb{Q}+i\mathbb{Q})^{\mathbb{N}_0}$ is dense in Y.\\
Let $\{x_j : j\in \mathbb{N}\}$ be an enumeration of $c_{00}\cap (\mathbb{Q}+i\mathbb{Q})^{\mathbb{N}_0}$ and let $(A_j)_{j \in \mathbb{N}}$ be a sequence of pairwise disjoint infinite subsets of $\mathbb{N}$. By condition 2, for every $j\in {\mathbb N}$ there exists $y_j \in Y\smallsetminus X$, $y_j$ supported in $A_j$.\\
Since $Y$ is a topological vector space, for every $j\in {\mathbb N}$, there exists $ c_j \in \mathbb{C} \smallsetminus \{0\}$ such that $c_j y_j \in B_Y(0, \frac{1}{j})$. 
 Let $f_j=x_j+c_j y_j \,\, \text{for every} \,\, j$. From $d_Y(f_j,x_j)<\frac{1}{j}$ and the fact that Y does not have isolated points it follows that $\{f_j: j \in \mathbb{N} \}$ is dense in Y. 
Also, $f_j \notin X$ because $y_j \notin X$. This proves that $F=\left <f_1, f_2, \dots \right >$ is dense in $Y$.\\
It suffices to show that $F \cap X = \{0\}$. \\
Suppose that there exists $\sum_{j=1}^M t_j f_j \in X \smallsetminus \{0\}$, $t_j \in  \mathbb{C}$. Since $x_1,x_2 \dots ,x_M \in c_{00}$ there exist $N$ such that $x_j(n)=0$ for all $j=1,2,\dots,M$ and $n \geq N$.
Let  $j_0 \in \{1,2, \dots, M\}$ be such that $t_{j_0} \neq 0$.\\
Then from assumption 4 we have that $\sum_{j=1}^M t_j f_j \chi_{A_{j_0} \cap [N,\infty)}\in X$
$$\sum_{j=1}^M t_j f_j \chi_{A_{j_0} \cap [N,\infty)}=t_{j_0} y_{j_0} \chi_{A_{j_0} \cap [N,\infty)} = t_{j_0} y_{j_0} \chi_{[N,\infty)} = t_{j_0} y_{j_0} -  t_{j_0} y_{j_0} \chi_{[1,N)} \in X$$
Since $t_{j_0} y_{j_0} \chi_{[1,N)} \in c_{00} \subset X$, $X$ is a vector space and $t_{j_0} \neq 0$ it follows that $y_{j_0} \in X$, which is a contradiction.

\end{proof}

\begin{remark} {\rm Using the terminology of \cite{aron top} (Definition 2.1), the assumptions of our lemma \ref{lem4.2} imply that $Y\setminus X$ is stronger than $c_{00}$. Thus, one can also use Theorem 2.2 of \cite{aron top}  to obtain the result of the previous lemma. We mention that although Theorem 2.2 of \cite{aron top} is stated for Banach spaces, it can easily be generalized to F-spaces.}
	\end{remark}

\begin{proposition}\label{prop4.4}
If $X,Y$ are spaces from the chain of theorem 2.12 such that $X \subsetneq Y$,  and $Y \neq \ell^{\infty}$ then conditions 1,2,3,4 of Lemma \ref{lem4.2} are satisfied.
\end{proposition}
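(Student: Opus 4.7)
The plan is to verify each of the four conditions of Lemma \ref{lem4.2} for every pair $(X,Y)$ from the chain of Theorem \ref{the212} with $Y\neq\ell^\infty$. Conditions 1, 3 and 4 are essentially routine and can be checked space-by-space, while condition 2 is the heart of the matter.

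I would first dispatch the easy conditions. Condition 1 is immediate since $c_{00}$ sits in every space of the chain (finitely supported sequences satisfy any summability, decay, or growth constraint), and every space in question is a subspace of $\mathbb{C}^{\mathbb{N}_0}$. Condition 4 follows because zeroing out coordinates only decreases $\sum|a_n|^p$, shrinks $n^k|a_n|$, and cannot increase $\limsup\sqrt[n]{|a_n|}$, hence preserves membership in $\ell^p$, $c_0$, $A^\infty(\mathbb{D})$, $H(\mathbb{D})$ and their intersections. Condition 3 (density of $c_{00}$) is standard for $\ell^p$, $\bigcap_{p>c}\ell^p$ and $c_0$ via tail truncation; for $A^\infty(\mathbb{D})$ one approximates $f=\sum a_n z^n$ by its Taylor polynomials $S_N f$, using $\|f^{(i)}-(S_Nf)^{(i)}\|_\infty\le\sum_{n>N}n^i|a_n|\to 0$ (which holds since $\sum n^{i+2}|a_n|<\infty$); for $H(\mathbb{D})$ it is uniform convergence of Taylor polynomials on compact subsets of $\mathbb{D}$; for $\mathbb{C}^{\mathbb{N}_0}$ it is trivial in the product topology. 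This is exactly where the excluded case $Y=\ell^\infty$ would fail.

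For condition 2, the key observation is that the chain is totally ordered, so it suffices to produce, for each admissible $Y$ and each infinite $A=\{l_1<l_2<\dots\}\subset\mathbb{N}_0$, a single sequence $y$ supported on $A$ that lies in $Y$ but not in its immediate predecessor $Z$ in the chain; such a $y$ automatically lies outside every $X\subsetneq Y$. The step $A^\infty(\mathbb{D})\subsetneq\bigcap_{p>0}\ell^p$ is handled directly by Remark \ref{rem2.9}. For every step within the $c_0$-portion of the chain I would take a power-law witness $y_{l_k}=k^{-\alpha}$, choosing $\alpha$ to straddle the defining exponents of $Z$ and $Y$: for instance $\alpha=1/a$ handles $\ell^a\subsetneq\bigcap_{q>a}\ell^q$, any $\alpha\in(1/b,1/a)$ handles $\bigcap_{q>a}\ell^q\subsetneq\ell^b$, and analogously for the remaining two $\ell^p$-steps and for $\bigcap_{p>b}\ell^p\subsetneq c_0$. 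For the step $\ell^\infty\subsetneq H(\mathbb{D})$ the witness $y_{l_k}=k$ is unbounded on its support, hence outside $\ell^\infty$, while $\sqrt[l_k]{k}\le\sqrt[k]{k}\to 1$ puts $y$ in $H(\mathbb{D})$. Finally, for $H(\mathbb{D})\subsetneq\mathbb{C}^{\mathbb{N}_0}$ the witness $y_{l_k}=l_k^{l_k}$ satisfies $\sqrt[l_k]{|y_{l_k}|}=l_k\to\infty$, so $y\notin H(\mathbb{D})$.

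The main obstacle is the bookkeeping in the $c_0$-portion of the chain: one must verify, for each consecutive pair $Z\subsetneq Y$, that there is an exponent $\alpha$ making $\sum k^{-\alpha q}$ converge for every $\ell^q$-condition defining membership in $Y$ while making $\sum k^{-\alpha p}$ diverge for some $\ell^p$-condition witnessing non-membership in $Z$, and that placing the sequence on the sparse support $A$ instead of on all of $\mathbb{N}$ does not destroy these summability properties. Both reduce to an elementary comparison of exponents together with the trivial inequality $l_k\ge k$, so I expect the proof to proceed smoothly once the case analysis is laid out.
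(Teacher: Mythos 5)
Your proof is correct, and conditions 1, 3 and 4 are handled exactly as in the paper. For condition 2, however, you take a genuinely different route. The paper's trick is uniform: for $X\in\{\ell^p,\bigcap_{q>a}\ell^q,c_0,\ell^\infty\}$ it picks an arbitrary witness $a\in Y\setminus X$ and \emph{spreads it out}, placing $a_k$ at position $i_k\in A$; since membership in these $X$ depends only on the sequence of nonzero values, and since moving entries to later positions cannot increase $\limsup\sqrt[n]{|a_n|}$ past $1$, the spread sequence stays in $Y$ and outside $X$ in one stroke, with explicit constructions reserved only for $X=A^\infty(\mathbb{D})$ (Remark \ref{rem2.9}) and $X=H(\mathbb{D})$ (the sequence $n^n$ on $A$, which coincides with your last witness). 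You instead build explicit power-law witnesses $y_{l_k}=k^{-\alpha}$ for each step of the chain and invoke the total ordering to conclude for non-adjacent pairs. Both work; the paper's spreading argument avoids all exponent bookkeeping and treats every $Y$ at once, while yours is more self-contained and makes the witnesses concrete. One point to tighten in your write-up: the full chain contains $\ell^p$ for \emph{every} $p$ and $\bigcap_{q>c}\ell^q$ for every $c$, so $\ell^b$ and $c_0$ have no literal ``immediate predecessor''; what your argument actually uses is that for each given pair $(X,Y)$ one can interpolate some $Z$ of the chain with $X\subseteq Z\subsetneq Y$ for which your exponent choice applies (e.g.\ $Z=\bigcap_{q>c}\ell^q$ when $X=\ell^c\subsetneq Y=\ell^b$, with $\alpha\in(1/b,1/c)$), which your parametrized constructions do provide. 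Also, your density argument for $c_{00}$ in $A^\infty(\mathbb{D})$ is correct but superfluous, since $A^\infty(\mathbb{D})$ is the bottom of the chain and never occurs as $Y$.
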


\begin{proof}
Let $X, Y$ be spaces from the {chain of theorem 2.12} such that $X \subset Y$, $X \neq Y$. It is obvious that condition 1 holds.
We now prove that condition 2 holds. Let $X = \ell^p, \bigcap_{p>a}\ell^p, c_0$ or $\ell^{\infty}$.
Since the inclusion $X \subset Y$ is strict, we can choose $a \in Y \smallsetminus X$. Let $A$ be an infinite subset of $\mathbb{N}$. We can spread out the elements $a_n$ in such a way that the support of $a$ is contained in $A$. To be more precise, let $A=\{i_1, \dots, i_k, \dots\}$ be an enumeration of $A$ such that $i_k < i_{k+1}$ for all $k \in \mathbb{N}$. Set:
\begin{align*}
    b_n = \begin{cases}
    a_k,\; n=i_k, \,\, k \in \mathbb{N} \\
    0, \;n \notin A
    \end{cases}
\end{align*}
Then, $y=(b_n)_n \in Y \smallsetminus X$ and has support in $A$.
This proves that condition 2 holds for these spaces.\\
If $X = A^{\infty}(\Di)$ then condition 2 follows from remark {\ref{rem2.9}}.\\ 
If $X = H(\Di)$ then we construct a sequence supported in $A=\{l_1<l_2<l_3 < \cdots\}$ :\\
\begin{align*}
    c_n= \left\{ \begin{array}{rcl}
        n^n & \mbox{if}
        & n=l_k  \text{ for some k}\\ 0 & \mbox{otherwise} 
         \end{array}\right.
\end{align*}
Then, $(c_n)_n \in Y \smallsetminus X$ and has support in $A$.

\vspace{0.5em}
\noindent We now prove that condition 3 holds.\\
If $Y = \ell^p, c_0, \mathbb{C}^{\mathbb{N}_0}$ then it is obvious that $c_{00}$ is dense in $Y$.\\
Let $Y = \bigcap_{p>a}\ell^p$. \\
The fact that the inclusion map among the $\ell^p$ spaces is continuous and $c_{00}$ is dense in each one of the spaces $\ell^p$ allows us to have control over any finite set of $\ell^p$ spaces. This proves that $c_{00}$ is dense in $\bigcap_{p>a}\ell^p$.\\
Let $Y = H(\Di)$. Every $a \in c_{00}$ can be identified with a complex polynomial.
It is well known that every holomorphic $f \in H(\Di)$ can be approached by polynomials, uniformly on the compact subsets of $\Di$.
It follows that $c_{00}$ is dense in $Y$.

\vspace{0.5em} \noindent
We now prove that condition 4 holds.\\
Let $A$ be a subset of $\mathbb{N}_0$ and $a = (a_n)_n \in X$. 
Then $|a_n \chi_{A}| \leq |a_n|$ for all $n \in \mathbb{N}_0$ and from this inequality condition 4 is obvious for the spaces $\ell^p, \bigcap_{q>a}\ell^q, 0\leq a <\infty, c_0 $. 
If $(a_n)_n \in H(\Di)$, equivalently $ \limsup_n \left \{ \sqrt[n]{|a_n|} \right \} \leq 1$, then $\limsup_n \left \{ \sqrt[n]{\left|a_n \chi_{A}\right|} \right \} \leq 1$, which proves that $a\chi_{A} \in H(\Di)$. 
Similarly, if $(a_n)_n \in A^{\infty} (\Di)$, equivalently $n^k a_n \xrightarrow{n \to \infty}  0$  for every $k \in \mathbb{N}$, then $n^k a_n \chi_{A} \xrightarrow{n \to \infty}  0$  for every $k \in \mathbb{N}$, which implies that $a\chi_{A} \in A^{\infty} (\Di)$. 
\end{proof}

\begin{theorem}\label{th4.5} 
If $X,Y$ are spaces from the {chain of Theorem 2.12} with $X \subsetneq Y$ and $Y \neq \ell^{\infty}$, then we have algebraic genericity for the couple $(X,Y)$.
\end{theorem}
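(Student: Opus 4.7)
The plan is to obtain Theorem \ref{th4.5} as a direct consequence of Lemma \ref{lem4.2} combined with Proposition \ref{prop4.4}. First I would fix an arbitrary pair $(X,Y)$ from the chain of Theorem \ref{the212} with $X\subsetneq Y$ and $Y\neq \ell^{\infty}$. The strategy is simply to check that the hypotheses of Lemma \ref{lem4.2} apply, and then invoke its conclusion verbatim.

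Next, I would note that Proposition \ref{prop4.4} precisely establishes that conditions (1), (2), (3), (4) of Lemma \ref{lem4.2} hold under these restrictions on $(X,Y)$: condition (1) because every space in the chain sits between $c_{00}$ and $\mathbb{C}^{\mathbb{N}_0}$; condition (2) by the case analysis in Proposition \ref{prop4.4} together with Remark \ref{rem2.9} for the case $X=A^{\infty}(\Di)$; condition (3) because $c_{00}$ is dense in each of the candidates for $Y$ when $Y\neq \ell^{\infty}$ (this is where the restriction $Y\neq \ell^{\infty}$ is actually used, since $c_{00}$ is not dense in $\ell^{\infty}$); and condition (4) since multiplication by a characteristic function either preserves absolute values pointwise (handling the $\ell^p$, $\bigcap_{p>a}\ell^p$, $c_0$, $\ell^{\infty}$ cases) or preserves the growth conditions that define $H(\Di)$ and $A^{\infty}(\Di)$.

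Having verified the hypotheses, Lemma \ref{lem4.2} immediately yields a vector subspace $F$ of $Y$ which is dense in $Y$ and satisfies $F\setminus \{0\}\subset Y\setminus X$, which is exactly the statement of algebraic genericity for the pair $(X,Y)$. Since the entire argument is a routine combination of previously established results, I do not anticipate any genuine obstacle; the only delicate point worth flagging is that the case $Y=\ell^{\infty}$ is genuinely excluded, because condition (3) of Lemma \ref{lem4.2} fails in that setting, and a different (non-separable) method (due to Papathanasiou \cite{pap}) is required to handle it, as the authors note.
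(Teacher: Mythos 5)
Your proposal is correct and follows exactly the paper's own proof, which simply combines Lemma \ref{lem4.2} with Proposition \ref{prop4.4}; your additional commentary on where the hypothesis $Y\neq\ell^{\infty}$ enters (failure of density of $c_{00}$ in $\ell^{\infty}$) is accurate and consistent with the authors' remarks.
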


\begin{proof}
It follows from Lemma {\ref{lem4.2}} and Proposition {\ref{prop4.4}}.
\end{proof}

\noindent If $Y=\ell^{\infty}$ then $X \subset c_0$. According to {Papathanasiou} \cite{pap} there exists a vector subspace $F$ of $\ell^{\infty}$ dense in $\ell^{\infty}$ such that $F\smallsetminus \{0\} \subset \ell^{\infty}\smallsetminus c_0 \subset \ell^{\infty} \smallsetminus X$. Thus, we have algebraic genericity for the couple $(X, \ell^{\infty})$. Combining this with theorem \red{\ref{th4.5}} we obtain:

\begin{theorem}
Let $(X,Y)$ be spaces from the chain of Theorem 2.12 with $X \subset Y$ and $X \neq Y$. Then we have algebraic genericity for the couple $(X,Y)$.
\end{theorem}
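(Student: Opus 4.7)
The plan is to split into two cases according to whether $Y=\ell^{\infty}$ or not, and in each case to invoke a result already established. The theorem is essentially a packaging statement that unifies Theorem \ref{th4.5} with the separate result of Papathanasiou for the non-separable endpoint.

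\textbf{Case 1: $Y\neq\ell^{\infty}$.} Here Theorem \ref{th4.5} applies directly and yields algebraic genericity for $(X,Y)$. Nothing new is needed, since Proposition \ref{prop4.4} has already verified for every such $Y$ in the chain that hypotheses (1)--(4) of Lemma \ref{lem4.2} hold, and Lemma \ref{lem4.2} then produces the required dense subspace $F\subset Y$ with $F\setminus\{0\}\subset Y\setminus X$.

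\textbf{Case 2: $Y=\ell^{\infty}$.} Inspecting the chain of Theorem \ref{the212}, the spaces $X$ strictly contained in $\ell^{\infty}$ are $A^{\infty}(\Di)$, the various $\bigcap_{p>\alpha}\ell^p$, the $\ell^p$ for $0<p<\infty$, and $c_0$; each of these is a subset of $c_0$. By Papathanasiou \cite{pap} there is a dense vector subspace $F$ of $\ell^{\infty}$ with $F\setminus\{0\}\subset \ell^{\infty}\setminus c_0$, and since $X\subset c_0$ we obtain $F\setminus\{0\}\subset \ell^{\infty}\setminus X$, which is exactly algebraic genericity for $(X,\ell^{\infty})$.

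The only genuine obstacle is the $Y=\ell^{\infty}$ case, which cannot be handled by Lemma \ref{lem4.2}: hypothesis (3) fails because $c_{00}$ is not dense in $\ell^{\infty}$ (for instance, the constant sequence $1$ is at distance at least $1$ from every finitely supported sequence), and more generally $\ell^{\infty}$ is not separable, so the enumeration of a countable dense set used in the proof of Lemma \ref{lem4.2} is simply unavailable. Papathanasiou's construction is designed precisely to bypass this non-separability, and once it is invoked the remainder of the proof reduces to the one-line combination above.
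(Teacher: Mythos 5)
Your proof is correct and follows exactly the paper's own argument: for $Y\neq\ell^{\infty}$ it invokes Theorem \ref{th4.5}, and for $Y=\ell^{\infty}$ it observes that $X\subset c_0$ and applies Papathanasiou's dense subspace of $\ell^{\infty}$ avoiding $c_0$. Your added explanation of why Lemma \ref{lem4.2} cannot handle $Y=\ell^{\infty}$ (failure of density of $c_{00}$, non-separability) matches the paper's own remark on this point.
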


\section{Spaceability}
In the last section we examine whether there is spaceability for a couple of spaces $(X,Y)$, where $X, Y$ are spaces in the  chain of Theorem 2.12. 

\vspace{0.5em} \noindent
Let us first recall the definition:
\begin{definition}
Let $X, Y$ be $F$ spaces, with $X \subset Y$ and $X \neq Y$. We say that we have spaceability for the couple $(X, Y)$ if there exists a closed infinite dimensional subspace $F$ of $Y$ such that $F \smallsetminus \{0\} \subset Y\smallsetminus X$.
\end{definition}

\noindent The main result is that if $X$ and $Y$ are two spaces of the chain of Theorem 2.12 such that $X \subsetneq Y$ then we have spaceability for the couple $(X,Y)$.

\begin{lemma}\label{lem5.2}
Let 
$X, Y$ be F spaces such that:
\begin{enumerate}
    \item $X \subsetneq Y \subset \mathbb{C}^{\mathbb{N}_0}$
    \item If $A\subset \mathbb{N}_0$ is infinite then there exists $y \in Y\smallsetminus X$  supported in $A$.
    \item Convergence in $Y$ implies pointwise convergence.
    \item For every $a \in X$ and  $A \subset \mathbb{N}$ the product $a \chi_A$ belongs to $X$. 
\end{enumerate}
Then we have spaceability for the pair $(X, Y)$.
\end{lemma}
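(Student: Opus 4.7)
The plan is to build $F$ as the closed linear span in $Y$ of a sequence of ``disjointly supported witnesses'' and to use conditions (3) and (4) to force every nonzero element of $F$ out of $X$. First I would fix pairwise disjoint infinite subsets $A_1,A_2,\ldots$ of $\mathbb{N}$. Condition (2) then supplies, for each $j\in\mathbb{N}$, a vector $y_j\in Y\setminus X$ supported in $A_j$; in particular $y_j\neq 0$. Let $F$ be the closed linear span of $\{y_j:j\in\mathbb{N}\}$ in $Y$. It is closed by construction, and since sequences with pairwise disjoint nonempty supports are linearly independent, $F$ contains the infinite-dimensional algebraic span of the $y_j$, so $F$ itself is infinite-dimensional.

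The crux is to show that $F\setminus\{0\}\subset Y\setminus X$. Fix $f\in F\setminus\{0\}$. Since $Y$ is a metric $F$-space, I can choose finite combinations $g_k=\sum_{j=1}^{N_k}\alpha_j^{(k)}y_j$ with $g_k\to f$ in $Y$; by condition (3) this gives $g_k(n)\to f(n)$ for every $n\in\mathbb{N}_0$. Pick $n_0$ with $f(n_0)\neq 0$; because each $y_j$ is supported in $A_j$, we must have $n_0\in A_{j_0}$ for some $j_0$, and for $k$ large the $j_0$-th term is the only one surviving at $n_0$, giving $g_k(n_0)=\alpha_{j_0}^{(k)} y_{j_0}(n_0)$. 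In particular $y_{j_0}(n_0)\neq 0$ and $\alpha_{j_0}^{(k)}\to c_{j_0}:=f(n_0)/y_{j_0}(n_0)\neq 0$. Repeating the same coordinate computation for every $n\in A_{j_0}$ yields $f(n)=c_{j_0}\,y_{j_0}(n)$, hence $f\chi_{A_{j_0}}=c_{j_0}\,y_{j_0}$.

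Now suppose for contradiction that $f\in X$. By condition (4) applied with $A=A_{j_0}$, we have $f\chi_{A_{j_0}}\in X$, i.e.\ $c_{j_0}\,y_{j_0}\in X$. Since $c_{j_0}\neq 0$ and $X$ is a vector space, this forces $y_{j_0}\in X$, contradicting the choice $y_{j_0}\in Y\setminus X$. Therefore $f\notin X$, and $F$ witnesses spaceability for the pair $(X,Y)$.

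The main obstacle I anticipate is the coordinate-extraction step: from merely knowing that $g_k\to f$ in the topology of $Y$, one has to produce a \emph{single} scalar $c_{j_0}$ that reconstructs $f$ everywhere on $A_{j_0}$. Condition (3) reduces the argument to the pointwise level, after which the disjointness of the supports collapses $g_k$ on $A_{j_0}$ to a single multiple of $y_{j_0}$, and condition (4) then turns that pointwise rigidity into the desired algebraic contradiction. Notably, the argument requires no quantitative control of the scalars $\alpha_j^{(k)}$, which is what allows it to work in arbitrary $F$-spaces and not just in Banach spaces.
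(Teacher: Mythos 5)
Your proposal is correct and follows essentially the same route as the paper's proof: the same closed span of disjointly supported witnesses $y_j$, the same pointwise extraction of the scalar $c_{j_0}=f(n_0)/y_{j_0}(n_0)$ via condition (3), and the same use of condition (4) to conclude $f\notin X$. The only cosmetic difference is that you phrase the last step as a contradiction while the paper argues directly that $f\chi_{A_{j_0}}=c_{j_0}y_{j_0}\notin X$.
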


\begin{proof}
Let $(A_j)_{j \in \mathbb{N}}$ be a sequence of pairwise disjoint infinite subsets of $\mathbb{N}$. By condition 2, for every $j$ there exists $y_j \in Y\smallsetminus X$ supported in $A_j$.\\
Consider $F=\overline{\left <y_j | j \in \mathbb{N} \right >}$.\\
It is obvious that $F$ is a closed linear subspace of $Y$. Since the sets $A_j$ are disjoint, it follows that $F$ is infinite dimensional.\\
It remains to show that if $f \in F, \ f \neq 0$ then $ f \notin X$.\\
Indeed, there exists a sequence $f^m \in \left <\{y_j | j \in \mathbb{N} \} \right >$ such that 
$f^m \xrightarrow{m \to \infty} f$ in $Y$ and by condition 3 we have 
$f^m(i) \xrightarrow{m \to \infty} f(i)$ for all $i \in \mathbb{N}_0$.
For every $m$ we can write $f^m=c_1^m y_1+ c_2^m y_2 +c_3^m y_3+ \dots$ where  finitely many of $c_j^m$  are non zero, i.e. for every $m$ the set $\{ j \in \mathbb{N} \ | \ c_j^m \neq 0 \}$ is finite.\\
But $f \neq 0$, so there exists $i_0 \in \mathbb{N}$ such that $f(i_0) \neq 0$.
If $i_0 \notin \bigcup_j A_j$ then $f^m(i_0)=0$ for all $m$, so $f(i_0)=\lim_m f^m(i_0)=0$, which is a contradiction.\\
Hence, $i_0 \in A_{j_0}$ for some $j_0 \in \mathbb{N}$.\\
Since $A_1, A_2, \dots$ are pairwise disjoint, we have $f^m(i)=c_{j_0}^m y_{j_0}(i)$ for all $i \in A_{j_0}$.\\
If $ y_{j_0}(i_0) = 0$ then $f^m(i_0)=0$ for all $m$, which is a contradiction as above, so $ y_{j_0}(i_0) \neq 0$.\\
Let $c_{j_0}= \lim_m c_{j_0}^m =\lim_m \frac{f^m(i_0)}{y_{j_0}(i_0)} = \frac{f(i_0)}{y_{j_0}(i_0)} \neq 0$.\\
Then for all $i \in A_{j_0}$ we have 
$$f(i)= \lim_m f^m(i)= \lim_m c_{j_0}^m y_{j_0}(i)=c_{j_0} y_{j_0}(i)$$
 thus $f \chi_{A_{j_0}}= c_{j_0} y_{j_0} \chi_{A_{j_0}}= c_{j_0} y_{j_0}  \notin X$ and by condition 4 we have 
 $f \notin X$ as needed.

\end{proof}

\begin{theorem}\label{th5.3}
Let $(X,Y)$ be spaces from the chain of Theorem 2.12 with $X \subsetneq Y$. Then we have spaceability for the couple $(X,Y)$.
\end{theorem}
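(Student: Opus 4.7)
The plan is to apply Lemma~\ref{lem5.2} to every admissible pair $(X,Y)$ from the chain of Theorem~\ref{the212}; then the work reduces to verifying its four hypotheses uniformly across the chain. Conditions~1 and 3 come essentially for free from Section~\ref{topological}: all spaces in the chain sit inside $\mathbb{C}^{\mathbb{N}_0}$, and convergence in each of them forces pointwise convergence (Propositions~\ref{2.5} and~\ref{2.6}, Remark~\ref{2.7}, together with the trivial pointwise bound that comes from the $F$-space topologies of $\ell^p$, $\bigcap_{p>a}\ell^p$ and $c_0$). Condition~4, that $X$ is stable under multiplication by an indicator sequence $\chi_A$, was verified space-by-space inside the proof of Proposition~\ref{prop4.4}, and I would reuse those estimates without change.

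The substantive point is Condition~2. Proposition~\ref{prop4.4} already supplies it whenever $Y\neq\ell^{\infty}$, so the only genuinely new case is $Y=\ell^{\infty}$, in which $X$ may be any of $A^{\infty}(\Di)$, $\bigcap_{p>0}\ell^p$, $\ell^p$ $(0<p<\infty)$, $\bigcap_{q>a}\ell^q$, or $c_0$. For any infinite $A\subset\mathbb{N}_0$, I would use the universal choice $y=\chi_A$, i.e.\ $y_n=1$ for $n\in A$ and $y_n=0$ otherwise. This $y$ is supported in $A$, lies in $\ell^{\infty}$ with $\|y\|_{\infty}=1$, and simultaneously escapes every candidate $X$ listed above: $y_n$ does not tend to $0$ (so $y\notin c_0$), its $p$-th power sum is $|A|=\infty$ for every finite $p$ (so $y$ is in no $\ell^p$ and hence in no $\bigcap_{q>a}\ell^q$ nor in $\bigcap_{p>0}\ell^p$), and $n^k y_n = n^k$ along $n\in A$ does not tend to $0$ (so $y\notin A^{\infty}(\Di)$).

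Once all four conditions are in place, Lemma~\ref{lem5.2} directly produces a closed infinite-dimensional subspace $F\subset Y$ with $F\setminus\{0\}\subset Y\setminus X$, which is precisely spaceability for the couple $(X,Y)$. The only point that needs a careful check is that the single universal sequence $\chi_A$ really defeats every possible smaller $X$ at once in the $Y=\ell^{\infty}$ case; beyond this, the proof is pure bookkeeping on top of Lemma~\ref{lem5.2} and the verifications already carried out in Sections~\ref{topological} and~\ref{algebraic}. I do not expect any serious obstacle at the theorem stage.
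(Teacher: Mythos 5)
Your proposal is correct and takes essentially the same route as the paper: both reduce the theorem to Lemma \ref{lem5.2} and verify its four hypotheses by reusing the work of Sections \ref{topological} and \ref{algebraic}. Your explicit handling of condition 2 in the case $Y=\ell^{\infty}$ via $y=\chi_A$ is a small extra care the paper omits --- it simply cites Proposition \ref{prop4.4}, whose verification of conditions 2 and 4 does not in fact use the hypothesis $Y\neq\ell^{\infty}$ --- so both arguments go through.
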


\begin{proof}
It suffices to see that conditions 1-4 of Lemma {\ref{lem5.2}} hold for any pair of spaces $X,Y$  from the {chain of theorem 2.12} with $X \subsetneq Y.$\\
Conditions 2 and 4 have been proved in Proposition 4.4.\\
Condition 3 has been proved in Section 2.
\end{proof}



\bigskip

\bigskip

\footnotesize
\bibliographystyle{amsplain}

\bigskip

\bigskip

\thanks{\noindent {\bf Keywords and phrases:}  Topological genericity, algebraic genericiy, spaceability, Baire's theorem, $\ell^p$ spaces.}

\smallskip

\thanks{\noindent {\bf 2010 MSC:} Primary 15A03; Secondary 46E10, 46E15.}

\bigskip

\bigskip

\noindent  Department of
Mathematics, University of Athens, Panepistimioupolis 157-84,
Athens, Greece.

\smallskip

\noindent \textit{E-mail Addresses:} \texttt{mike.axa3@gmail.com},
 \texttt{ideliyanni@math.uoa.gr},
 \texttt{inoloukidou@gmail.com},
\texttt{vnestor@math.uoa.gr},
 \texttt{kotsos129@hotmail.com},
\texttt{nataliatz99@gmail.com}

\end{document}